\begin{document}
\baselineskip = 16pt

\newcommand \ZZ {{\mathbb Z}}
\newcommand \NN {{\mathbb N}}
\newcommand \RR {{\mathbb R}}
\newcommand \PR {{\mathbb P}}
\newcommand \AF {{\mathbb A}}
\newcommand \GG {{\mathbb G}}
\newcommand \QQ {{\mathbb Q}}
\newcommand \CC{{\mathbb C}}
\newcommand \bcA {{\mathscr A}}
\newcommand \bcC {{\mathscr C}}
\newcommand \bcD {{\mathscr D}}
\newcommand \bcF {{\mathscr F}}
\newcommand \bcG {{\mathscr G}}
\newcommand \bcH {{\mathscr H}}
\newcommand \bcM {{\mathscr M}}
\newcommand \bcJ {{\mathscr J}}
\newcommand \bcL {{\mathscr L}}
\newcommand \bcO {{\mathscr O}}
\newcommand \bcP {{\mathscr P}}
\newcommand \bcQ {{\mathscr Q}}
\newcommand \bcR {{\mathscr R}}
\newcommand \bcS {{\mathscr S}}
\newcommand \bcV {{\mathscr V}}
\newcommand \bcW {{\mathscr W}}
\newcommand \bcX {{\mathscr X}}
\newcommand \bcY {{\mathscr Y}}
\newcommand \bcZ {{\mathscr Z}}
\newcommand \goa {{\mathfrak a}}
\newcommand \gob {{\mathfrak b}}
\newcommand \goc {{\mathfrak c}}
\newcommand \gom {{\mathfrak m}}
\newcommand \gon {{\mathfrak n}}
\newcommand \gop {{\mathfrak p}}
\newcommand \goq {{\mathfrak q}}
\newcommand \goQ {{\mathfrak Q}}
\newcommand \goP {{\mathfrak P}}
\newcommand \goM {{\mathfrak M}}
\newcommand \goN {{\mathfrak N}}
\newcommand \uno {{\mathbbm 1}}
\newcommand \Le {{\mathbbm L}}
\newcommand \Spec {{\rm {Spec}}}
\newcommand \Gr {{\rm {Gr}}}
\newcommand \Pic {{\rm {Pic}}}
\newcommand \Jac {{{J}}}
\newcommand \Alb {{\rm {Alb}}}
\newcommand \Corr {{Corr}}
\newcommand \Chow {{\mathscr C}}
\newcommand \Sym {{\rm {Sym}}}
\newcommand \alb {{\rm {alb}}}
\newcommand \Prym {{\rm {Prym}}}
\newcommand \cha {{\rm {char}}}
\newcommand \eff {{\rm {eff}}}
\newcommand \tr {{\rm {tr}}}
\newcommand \Tr {{\rm {Tr}}}
\newcommand \pr {{\rm {pr}}}
\newcommand \ev {{\it {ev}}}
\newcommand \cl {{\rm {cl}}}
\newcommand \interior {{\rm {Int}}}
\newcommand \sep {{\rm {sep}}}
\newcommand \td {{\rm {tdeg}}}
\newcommand \alg {{\rm {alg}}}
\newcommand \im {{\rm im}}
\newcommand \gr {{\rm {gr}}}
\newcommand \op {{\rm op}}
\newcommand \Hom {{\rm Hom}}
\newcommand \Hilb {{\rm Hilb}}
\newcommand \Sch {{\mathscr S\! }{\it ch}}
\newcommand \cHilb {{\mathscr H\! }{\it ilb}}
\newcommand \cHom {{\mathscr H\! }{\it om}}
\newcommand \colim {{{\rm colim}\, }} % colimit
\newcommand \End {{\rm {End}}}
\newcommand \coker {{\rm {coker}}}
\newcommand \id {{\rm {id}}}
\newcommand \van {{\rm {van}}}
\newcommand \spc {{\rm {sp}}}
\newcommand \Ob {{\rm Ob}}
\newcommand \Aut {{\rm Aut}}
\newcommand \cor {{\rm {cor}}}
\newcommand \Cor {{\it {Corr}}}
\newcommand \res {{\rm {res}}}
\newcommand \red {{\rm{red}}}
\newcommand \Gal {{\rm {Gal}}}
\newcommand \PGL {{\rm {PGL}}}
\newcommand \Bl {{\rm {Bl}}}
\newcommand \Sing {{\rm {Sing}}}
\newcommand \spn {{\rm {span}}}
\newcommand \Nm {{\rm {Nm}}}
\newcommand \inv {{\rm {inv}}}
\newcommand \codim {{\rm {codim}}}
\newcommand \Div{{\rm{Div}}}
\newcommand \sg {{\Sigma }}
\newcommand \DM {{\sf DM}}
\newcommand \Gm {{{\mathbb G}_{\rm m}}}
\newcommand \tame {\rm {tame }}
\newcommand \znak {{\natural }}
\newcommand \lra {\longrightarrow}
\newcommand \hra {\hookrightarrow}
\newcommand \rra {\rightrightarrows}
\newcommand \ord {{\rm {ord}}}
\newcommand \Rat {{\mathscr Rat}}
\newcommand \rd {{\rm {red}}}
\newcommand \bSpec {{\bf {Spec}}}
\newcommand \Proj {{\rm {Proj}}}
\newcommand \pdiv {{\rm {div}}}
\newcommand \CH {{\it {CH}}}
\newcommand \wt {\widetilde }
\newcommand \ac {\acute }
\newcommand \ch {\check }
\newcommand \ol {\overline }
\newcommand \Th {\Theta}
\newcommand \cAb {{\mathscr A\! }{\it b}}

\newenvironment{pf}{\par\noindent{\em Proof}.}{\hfill\framebox(6,6)
\par\medskip}

\newtheorem{theorem}[subsection]{Theorem}
\newtheorem{conjecture}[subsection]{Conjecture}
\newtheorem{proposition}[subsection]{Proposition}
\newtheorem{lemma}[subsection]{Lemma}
\newtheorem{remark}[subsection]{Remark}
\newtheorem{remarks}[subsection]{Remarks}
\newtheorem{definition}[subsection]{Definition}
\newtheorem{corollary}[subsection]{Corollary}
\newtheorem{example}[subsection]{Example}
\newtheorem{examples}[subsection]{examples}

\title{Involutions on algebraic surfaces and zero cycles }
\author{Kalyan Banerjee}

\address{Indian Statistical Institute, Bangalore Center, Bangalore 560059}

\email{kalyanb$_{-}$vs@isibang.ac.in}

\begin{abstract}
In this note we are going to consider a smooth projective surface equipped with an involution and study the action of the involution at the level of Chow group of zero cycles.
\end{abstract}

\maketitle

\section{Introduction}
In this note we want to consider the generalised Bloch conjecture \cite{Vo}  [conjecture 11.19], which says that the action of a degree two correspondence on the Chow group of zero cycles on a smooth projective surface is determined by its cohomology class in $H^4(S\times S,\ZZ)$. This is equivalent to the following: let $\Gamma$ be a correspondence of codimension $2$ on $S\times T$ where $S,T$ are smooth projective surfaces over the field of complex numbers. Suppose that $\Gamma^*$ vanishes on $H^0(T,\Omega^2_T)$ then the homomorphism $\Gamma_*$ from $\CH_0(S)$ to $\CH_0(T)$ vanishes on the kernel of the albanese map $alb_S:\CH_0(S)\to Alb(S)$.

In  \cite{Voi}, the conjecture was proved for a symplectic involution on a $K3$ surface. In this paper the author consider an automorphism of order two $i$ of the given K3 surface, such that $i^*$ acts as identity on globally holomorphic $2$-forms, then $i_*$ acts as identity on $\CH_0$ of the K3 surface. Also the similar question was considered in \cite{G} for intersection of quadrics and cubics in $\PR^4$ which are examples of K3 surfaces. Also in
\cite{HK} the question was considered and proved for certain examples of K3 surfaces equipped with a symplectomorphism.

In this note we prove the following theorem:
\smallskip

\begin{theorem}
Let $S$ be a smooth surface with $p_g=q=0$ and  having an involution $i$ on it such that $S/i$  has an elliptic pencil on it. Then the involution $i$ acts as $-1$ on the group of degree zero zero cycles on $S$, modulo rational equivalence (denoted by $A_0(S)$).
\end{theorem}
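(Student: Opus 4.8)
The plan is to reduce the statement to the vanishing of the Chow group of zero cycles on the quotient, where the hypothesis on the elliptic pencil lets us invoke the Bloch conjecture in the (already established) case of surfaces that are not of general type. First I would pass to smooth models on which the quotient map becomes a genuine double cover. The involution $i$ has fixed locus consisting of a smooth curve (possibly empty) together with finitely many isolated points $p_1,\dots,p_k$; locally at such an isolated fixed point $i$ acts as $-1$ on the tangent plane, so $T=S/i$ acquires an ordinary double point there. Blowing these points up, $i$ lifts to an involution $\wt i$ on $\wt S=\Bl_{p_1,\dots,p_k}S$ whose fixed locus is now purely one-dimensional (each exceptional $\PR^1$ is pointwise fixed, since $-1$ acts trivially on the projectivized tangent space). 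Hence $\wt T=\wt S/\wt i$ is smooth and $\wt\pi\colon\wt S\to\wt T$ is a finite flat double cover. Since blowing up points does not alter the group of degree zero zero cycles of a smooth projective surface, $A_0(\wt S)\cong A_0(S)$ compatibly with the involutions, so it suffices to prove $\wt i_*=-1$ on $A_0(\wt S)$.

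Next I would record the invariants of $\wt T$. Holomorphic forms descend along $\wt\pi$, giving $H^0(\wt T,\Omega^j)=H^0(\wt S,\Omega^j)^{\wt i}$; since $p_g(\wt S)=p_g(S)=0$ and $q(\wt S)=q(S)=0$, this yields $p_g(\wt T)=q(\wt T)=0$. Moreover $\wt T$ inherits the elliptic pencil of $S/i$, so its Kodaira dimension is at most $1$ and $\wt T$ is not of general type. The key input is then the Bloch conjecture for surfaces not of general type, due to Bloch--Kas--Lieberman: a smooth projective surface with $\kappa\le 1$ and $p_g=0$ has trivial Albanese kernel, and because $q(\wt T)=0$ we conclude $A_0(\wt T)=0$. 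I expect this step to be the main obstacle and the heart of the argument: everything hinges on manufacturing, from the elliptic pencil on $S/i$, a smooth model to which a known case of Bloch's conjecture genuinely applies, and on verifying that the pencil and the vanishing of $p_g,q$ survive the resolution.

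Finally I would exploit the double cover. For a zero cycle the composite $\wt\pi^*\wt\pi_*$ carries a point $x$ to the full scheme-theoretic fibre over $\wt\pi(x)$, which equals $x+\wt i(x)$ whether or not $x$ is a ramification point (at a ramification point the length two fibre gives $2x=x+\wt i(x)$). Hence $\wt\pi^*\wt\pi_*=\id+\wt i_*$ on $A_0(\wt S)$. Since $A_0(\wt T)=0$ forces $\wt\pi_*=0$, we obtain $\id+\wt i_*=0$, that is $\wt i_*=-1$, and transporting this back along the blow-up gives $i_*=-1$ on $A_0(S)$. I note that this is an integral statement requiring no inversion of $2$, precisely because the relation $\wt\pi^*\wt\pi_*=\id+\wt i_*$ holds on the nose rather than merely after passing to eigenspaces.
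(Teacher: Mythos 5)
Your proposal is correct, but it takes a genuinely different route from the paper, and the comparison is instructive. You resolve the quotient to a smooth elliptic surface $\wt T$ with $p_g=q=0$, invoke the Bloch--Kas--Lieberman theorem \emph{downstairs} to get $A_0(\wt T)=0$, and finish with the integral push--pull identity $\wt\pi^{*}\wt\pi_{*}=\id+\wt i_{*}$ for the finite flat double cover. The paper instead runs the Bloch--Kas--Lieberman construction \emph{upstairs}: it pulls the elliptic pencil back from $S/i$, forms the Jacobian fibration $J\to L$ with a dominant map $g:\wt S\to J$ and a quasi-inverse correspondence $\lambda$, shows that $2n(z+i_{*}z)-\lambda_{*}g_{*}z$ is supported on a curve, and uses the rationality of $J$ (so $T(J)=0$) to conclude only that the $i$-invariant part of $A_0(S)$ is finite dimensional; it then needs Voisin's Bloch--Srinivas-type theorem to convert finite-dimensionality into the factorization of $\id+i_{*}$ through $\Alb(S)=0$. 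Your version short-circuits both steps: the elementary identity $\wt\pi^{*}\wt\pi_{*}=\id+\wt i_{*}$ replaces the decomposition-of-the-diagonal argument, the conclusion is integral on the nose, and your resolution of the $A_1$ points also covers the case of a singular quotient, which the paper's corollary sidesteps by assuming $S/i$ smooth. What the paper's longer route buys is the intermediate statement (its Theorem \ref{theorem2} and Corollary \ref{cor1}) that the invariant part of $A_0$ is finite dimensional, which applies for instance to general-type double covers of elliptic surfaces and is of independent interest for Bloch's conjecture on the quotient. Both arguments ultimately rest on the same input, namely the triviality of the degree-zero Chow group of an elliptic surface with $p_g=q=0$.
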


\smallskip

Our method is the same as in the proof of Bloch's conjecture for surfaces not of general type with $p_g=0$ as in \cite{BKL}. 

In the next section we try to understand the action of the involution on the Chow group of a general smooth, projective surface. This we do by reducing the problem to a very general hyperplane section of the surface itself. Precisely speaking let $\tau$ be the involution on the surface $S$. Let $j_t$ denote the embedding of a very general hyperplane section $C_t$ into $S$ and $j_{\tau (t)}$ the embedding of $\tau(C_t)$ into $S$. Then we consider  the kernel of the bi-Gysin homomorphism $j_{t*}+j_{\tau(t)*}$ from $J(C_t)\times J(\tau(C_t))$ to $A^2(S)$ ($A^i$ is the group of algebraically trivial codimension $i$ algebraic cycles modulo rational equivalence, it is also denoted by $A_{n-i}$, where $n$ is the dimension of the ambient variety). By monodromy argument we prove that the kernel is either countable or $\tau$ acts as $-1$ on the image of $J(C_t)$ under $j_{t*}$ into $A^2(S)$. Similar techniques will show that the kernel of $j_{t*}-j_{\tau(t)*}$ is either countable or $\tau$ acts as identity on the image of $J(C_t)$. Then combining these two facts we show that for a surface with $p_g>0$ and $q=0$, we have three possibilities:

\textit{ Either both the kernels $j_{t*}+j_{\tau(t)*},j_{t*}-j_{\tau(t)*}$ are countable or $\tau$ acts as $-id$ on the image of $J(C_t)$ under $j_{t*}$ or as identity on the image of $J(C_t)$ under $j_{t*}$.}

\smallskip 

So if we can exclude the first possibility then we know the action of $\tau$ on $A^2(S)$, by studying the action of $\tau$ on the Jacobian of a very general hyperplane section.

{\small \textbf{Acknowledgements:} The author would like to thank the ISF-UGC project for funding this project and also thanks the hospitality of Indian Statistical Institute, Bangalore Center for hosting this project. The author is indebted to Ramesh Sreekantan for suggesting this problem to the author and for many helpful discussions on the theme of the paper. Lastly the author is grateful to Chuck Weibel for constructive criticism on improving the exposition of the paper and for his advice to improve \ref{theorem2} and to generalize theorem \ref{theorem 1}.

We assume that the ground field is algebraically closed and of characteristic zero.}

%\section{Involution on an example of Hilbert modular surface}
%\label{section2}
%Let $K$ be a real quadratic field of discriminant $D$. Let $\mathfrak a$ be its ring of integers. The group $G=\bf{SL_2}(\mathfrak a)/\{-1,1\}$ acts on $\bcH\times \bcH$, where $\bcH$ is the upper half plane. The quotient space $\bcH^2/G$ can be made compact with finitely many cusp singularities. This will we a normal complex algebraic surface, which has finitely many singular points. Resolving the singularities one obtains a non-singular algebraic surface denoted by $S(D)$, corresponding to the discriminant $D$. So for any  discriminant $D$ the surface $S(D)$ is defined. It is known as the Hilbert modular surface with discriminant $D$. For a classification of Hilbert modular surfaces please see \cite{HZ}. Corresponding to $D=113$, the surface $S(D)$ is a surface of general type with geometric genus equal to $4$. It is known \cite{VG}[page 203] that the canonical map from $S(D)$ is a degree two morphism onto a cubic surface in $\PR^3$. So we have a natural involution $i$ acting on $S(D)$, we want to prove that $i_*$ at the level of Chow group of zero cycles is $-1$. For our convenience let us denote the surface $S(D)$ by $S$.

\section{The Bloch-Kas-Liebarman technique}

In this section we prove the following theorem:

\begin{theorem}
\label{theorem2}
Let $S$ be a smooth surface  admitting a $2:1$ map $f$ to a surface $F$ with $p_g=q=0$ and admitting an elliptic pencil. Let $i$ be the involution on $S$ arising from the $2:1$ map. Then the group of invariants of $A_0(S)$, given by
$$\{z\in A_0(S):i(z)=z\}$$
is finite dimensional.
\end{theorem}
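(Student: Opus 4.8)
The plan is to reduce the statement to the vanishing $A_0(F)=0$, which the elliptic pencil furnishes through Bloch--Kas--Lieberman, and then to bound the invariant cycles by torsion; the passage from $F$ back to $S$ is formal. First I would record the two relations attached to the degree-two Galois cover $f\colon S\to F$ with covering involution $i$: on degree-zero zero-cycles one has the projection-formula identities
$$
f_*f^* = 2\cdot\id ,\qquad f^*f_* = \id + i_* ,
$$
the second expressing ``pull back, then push forward'' as the norm of the $\ZZ/2$-action. Both persist even over the branch locus, as one checks on a local fibre, so I only need $f$ finite flat of degree two with $S$ (and a smooth model of $F$) nonsingular.

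Next I would prove $A_0(F)=0$. A surface carrying an elliptic pencil has Kodaira dimension at most $1$ --- a general fibre $E$ is a smooth genus-one curve with $E^2=0$, so adjunction gives $K_F\cdot E = 2g(E)-2 = 0$, and a moving family of curves meeting $K_F$ trivially is incompatible with $K_F$ being big --- so $F$ is not of general type. Together with $p_g(F)=q(F)=0$ this is exactly the Bloch--Kas--Lieberman situation: the Albanese map $A_0(F)\to \Alb(F)$ is an isomorphism, and $q(F)=0$ forces $\Alb(F)=0$, so $A_0(F)=0$. This is the only place the pencil really enters, and where the ``BKL technique'' of the section heading does its work: it moves zero-cycles onto the elliptic fibres and controls them through the relative Jacobian of the fibration.

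Finally I would combine the inputs. If $z\in A_0(S)$ satisfies $i_*z=z$, then $2z = z+i_*z = f^*f_*z$, while $f_*z\in A_0(F)=0$, so $f^*f_*z=0$ and hence $2z=0$. Thus the invariant subgroup lies in $A_0(S)[2]$, and by Roitman's theorem the Albanese map is an isomorphism on torsion, whence $A_0(S)[2]\cong \Alb(S)[2]=(\ZZ/2)^{2q(S)}$ is finite. So the group of invariants is in fact finite, which is stronger than the asserted finite-dimensionality.

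The step I expect to be the real obstacle is not this formal manipulation but securing $A_0(F)=0$ cleanly. If $i$ has isolated fixed points then $F=S/i$ acquires $A_1$ (rational double point) singularities there, and I would have to pass to a resolution $\pi\colon\tilde F\to F$, check that $p_g$, $q$ and the elliptic pencil all survive --- they do, since an $A_1$ resolution is crepant and $q$ is a birational invariant --- and use that $A_0$ is unchanged by resolving rational double points to transport the vanishing back to $F$. Getting this birational bookkeeping right, along with the precise form of the norm relation over the ramification locus, is the only delicate point.
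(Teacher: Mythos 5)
Your proof is correct, but it takes a genuinely different route from the paper's. The paper does not quote the Bloch--Kas--Lieberman theorem on the quotient $F$; instead it re-runs the BKL construction upstairs: it pulls the elliptic pencil back to a blow-up $\wt{S}$, maps $\wt{S}$ to the Jacobian fibration $J$ of the pencil on $\wt{F}$, builds an explicit correspondence quasi-inverse $\lambda$ to that map, and shows that $2n(z+i_*z)-\lambda g_*(z+i_*z)$ is supported on a fixed curve $Y'=f^{-1}(Y)$; the only classification input is that $J$, an elliptic surface with a section and $p_g=q=0$, is rational, so $T(J)=0$, and the conclusion is that $\id+i_*$ applied to the Albanese kernel lands in cycles supported on $Y'$, i.e.\ is finite-dimensional. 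You instead invoke the full BKL theorem on $F$ itself (your adjunction argument that the elliptic pencil forces $\kappa(F)\le 1$, hence $F$ not of general type, is the right justification) to get $A_0(F)=0$, and then the norm identity $f^*f_*=\id+i_*$ kills $z+i_*z$ outright, so the invariants sit inside $A_0(S)[2]$, which is finite by Roitman. Your conclusion is strictly stronger --- you in fact get $i_*=-\id$ on all of $A_0(S)$, which is the paper's eventual main theorem, reached there only after a further appeal to Voisin's decomposition argument in the final corollary --- and your route is much shorter; what the paper's route buys is independence from the full BKL classification theorem (it needs only $T(J)=0$ for the Jacobian surface) and a template the author wants to showcase. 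Two points to nail down in your version: you need $f$ to be finite, not merely generically $2:1$, for $f^*$ and the identity $f^*f_*=\id+i_*$ to make sense (the paper's proof makes the same implicit assumption when it writes $f^{-1}(q_i(t))=\{q_i'(t),q_i''(t)\}$), and BKL requires $F$ smooth, which your closing remarks on resolving the $A_1$ singularities of $S/i$ correctly address.
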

\begin{proof}
To prove that the group of $i$-invariants of  the Chow group of degree zero cycles of $S$, we follow the Bloch-Kas-Lieberman technique as presented in \cite{BKL}. First consider the pencil of elliptic curves on the surface $F$. That is a  map from $F\dashrightarrow L$, where $L$ is isomorphic to $\PR^1$.

Suppose that a pencil of curves on a surface $F$ can be given by choosing a projective line $L$ in $\PR(H^0(F,D)):=|D|$, where $D$ is a line bundle on $F$. So every element $t$ of this projective line gives rise to a global section $\sigma_t$ of $D$, which is non-zero and well-defined upto scalar multiplication. Let $F_t$ be the curve in $F$ defined by the zero locus of $\sigma_t$. Now let $\sigma_0,\sigma_{\infty}$ be two linearly independent global sections spanning the two dimensional vector subspace of $H^0(F,D)$, underlying the line $L$. Then any element $\sigma_t$ in this vector space\ look like $\sigma_0+t\sigma_{\infty}$. Now the rational map $F\to L$ is defined by
$$x\mapsto [\sigma_0(x):\sigma_{\infty}(x)]$$
and it is not defined along the common zero locus of $\sigma_0=\sigma_{\infty}=0$.
Consider the surface
$$\wt{F}=\{(x,t)\in F\times L|x\in F_t\}$$
this is nothing but the blow-up of $F$ along the base locus of the above rational map. Then sending $(x,t)$ to $t$ defines a regular map from $\wt{F}$ to $L$. That is we blow up the base locus of the rational map $F\dashrightarrow L$. Now consider the pull-back of $\wt{F}\to F$ to $S$, call it $\wt{S}$. Then $\wt{S}$ is nothing but the blow up of $S$ along the base locus of the rational map $S\dashrightarrow L$. Observe that fixing a point $0$ in on $F$, which is in the base locus of the pencil, we have  a section of $\wt{F}\to L$ given by $t\mapsto (0,t)$. Let us continue to denote the map from $\wt{S}$ to $\wt{F}$ by $f$.

Consider the Jacobian fibration $J\to L$ corresponding to $\wt{F}\to L$. Now fix a smooth hyperplane section $Y$ of $\wt{F}$ under the embedding of $\wt{F}$ in some $\PR^N$. Let $\pi$ be the morphism from $\wt{S}\to L$ and $\pi'$ is from $\wt{F}\to L$. Let us have
$$Y\cap \pi'^{-1}(t)=\sum_{i=1}^n p_i(t)$$
Then we have a  map $g$ from $\wt{S}$ to $J$
$$q\mapsto \alb_{t}(nf(q)-\sum_i p_i(\pi(q)))$$
where $\alb_t$ is the Albanese map from $F_t$ to $J_t$, $t=\pi(q)$. It is defined because the pencil $\wt{F}\to L$ has a section. This  map is dominant as it is dominant on fibers.

\begin{lemma}
Let $T(J)$ denote the albanese kernel for $J$. If $T(J)=0$, then the group of invariants under the action of $i$, in $A_0(S)$ is finite dimensional.
\end{lemma}
\begin{proof}
The proof of this lemma follows by arguing as in \cite{BKL}[proposition 4]. To prove the claim we have to  understand the quasi-inverse of  $g$ given by a correspondence on $\wt{S}\times J$. Let $\alpha$ belong to $J$ that lies over $t\in L$. View $\alpha$ as a zero cycle on $J_t$, that is it is an element in $\Pic^0(J_t)$ (by using the section for the Jacobian fibration). Since $\Pic^0(J_t)$ is isomorphic to $J_t$, there is a unique point in $q_{i}(t)$ on $F_t$ such that $q_i(t)-p_i(t)$ is rationally equivalent to  $\alpha$. Now $f^{-1}(q_i(t))=\{q_i'(t),q_i''(t)\}, f^{-1}(p_i(t))=\{p_i'(t),p_i''(t)\}$. So we can define $\lambda$ to be
$$\alpha\mapsto \sum_i (q_i'(t)+q_i''(t))-(p_i'(t)+p_i''(t)).$$

Let $q-p$ be a zero cycle where $q,p$ are closed points on $\wt{S}$.  Then we compute $\lambda g_*(q-p+i(q-p))$. We have by definition
$$g_*(q+iq-p-ip)=nf_{*}(q+iq-p-ip)-\sum_{i=1}^n 2 (p_i(\pi(q))-p_i(\pi(p))\;,$$
let $f_(q)=q', f(p)=p'$.
Then
$$\lambda g_*(q+iq-p-i(p))=\lambda[(2nq'-2np')-(2\sum_{i=1}^n (p_{i}(\pi(q)))-p_i(\pi(p)))]$$
which can be re-written as
$$2\sum_{i=1}^n \lambda(q'-(p_{i}(\pi(q))))+\lambda(p'-(p_{i}(\pi(p))))\;.$$
Now $\lambda(q'-(p_{i}(\pi(q))))=(q+iq-p_i'(\pi(q))-p_i''(\pi(q)))$.
Therefore $\lambda g_*(q+iq-p-i(p))$ is equal to
$$2n(q+i(q)-p-i(p))-a$$
where $a$ is a zero cycle supported on $Y'=f^{-1}(Y)$. So for general hyperplane section $Y$ of $\wt{F}$, we have $Y'$ a smooth projective curve.
Therefore by Chow moving lemma, for any zero cycle $z$ of degree zero on $\wt{S}$, we have
$$2n(z+iz)-\lambda g_*(z)\;,$$
is supported on the Jacobian of $Y'$.
Suppose $z$ belongs to the albanese kernel $T(\wt{S})$. Then we have $g_*(z)$ belonging to the albanese kernel  $T(J)$. If $T(J)$ is zero then we can conclude that $g_*(z)$ is rationally equivalent to $0$ on $J$. So we have $g_*(i(z))$ is also rationally equivalent to zero. Composing with $\lambda$ we get that $2n(z+iz)$ is supported on $J(Y')$. Tensoring with $\QQ$ we get that
$z+iz$ is supported on $J(Y')\otimes _{\ZZ}\QQ$. So it means that the group $T(\wt{S})^i$ of $i$-invariant elements in $T(\wt{S})$, is finite dimensional (rationally): in  the sense that there exists a smooth projective curve $C$, and a correspondence $\Gamma$ on $C\times \wt{S}$ such that $\Gamma_*$ from $J(C)\otimes_{\ZZ}\QQ$ to $T(\wt{S})^i\otimes _{\ZZ}\QQ$ is surjective. Since the albanese map is surjective it follows that the group of $i$-invariant elements of $A_0(\wt{S})\otimes_{\ZZ}\QQ$ is finite dimensional.  Then by lemma 3.1 in \cite{GG} it follows that the group of $i$-invariant elements of $A_0(\wt{S})$ is finite dimensional. Since $\wt{S}$ is a blow-up of $S$ and finite dimensionality is a birational invariant, we have the group of $i$-invariant elements of $A_0(S)$ is finite dimensional.
\end{proof}

Now we repeat the proof  that $T(J)=0$ by showing that $J$ is rational following \cite{BKL}.

\begin{lemma}
$$T(J)=0$$
\end{lemma}

\begin{proof}
For a proof please see \cite{BKL}[Proposition 4] or \cite{SS}. The proof basically relies on the fact that an elliptic surface with $p_g=0$ is rational.
\end{proof}
\end{proof}

\begin{corollary}
\label{cor1}
Let $S$ be a surface of general type of geometric genus zero and with an involution $i$, such that $S/i$ is smooth with $p_g=0=q$ and has an elliptic pencil on it. Then the  group $A_0(S/i)$ is finite dimensional.
\end{corollary}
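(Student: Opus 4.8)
The plan is to recognize that the hypotheses of the corollary are precisely those of Theorem \ref{theorem2} for the quotient morphism, and then to transport the finite dimensionality of the $i$-invariant part of $A_0(S)$ across the degree two map using the push--pull formalism. Write $F := S/i$ and let $f \colon S \to F$ be the quotient morphism. By assumption $F$ is smooth with $p_g=q=0$ and carries an elliptic pencil, $f$ is a $2:1$ map, and $i$ is exactly the involution attached to $f$. Hence Theorem \ref{theorem2} applies and yields that the invariant subgroup
$$A_0(S)^i = \{z \in A_0(S) : i(z) = z\}$$
is finite dimensional; that is, there is a smooth projective curve $C$ and a correspondence $\Gamma$ on $C\times S$ with $\Gamma_* \colon J(C)\otimes_{\ZZ}\QQ \to A_0(S)^i\otimes_{\ZZ}\QQ$ surjective.

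Next I would set up the flat pullback $f^*\colon A_0(F)\to A_0(S)$ and the proper pushforward $f_*\colon A_0(S)\to A_0(F)$. Two elementary facts drive the argument. First, since the fibre of $f$ over any closed point has length $2$, the composite $f_* f^*$ equals multiplication by $2$ on $A_0(F)$. Second, the relation $f\circ i = f$ gives $i^* f^* = (f\circ i)^* = f^*$, and because $i$ is an automorphism with $i^{-1}=i$ one has $i_* = i^*$; thus $i_*$ fixes every class in the image of $f^*$, i.e.
$$f^*\bigl(A_0(F)\bigr) \subseteq A_0(S)^i .$$

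With these in hand, I would tensor with $\QQ$. The identity $f_*f^* = 2\,\id$ shows that $f^*\otimes\QQ$ is injective and that $\tfrac12 f_*$ is a left inverse, so $f^*$ exhibits $A_0(F)\otimes_{\ZZ}\QQ$ as a retract (direct summand) of $A_0(S)^i\otimes_{\ZZ}\QQ$. Concretely, take the composite correspondence inducing $\tfrac12 f_*\circ\Gamma_*$; for any $w\in A_0(F)\otimes_{\ZZ}\QQ$ we have $f^*w\in A_0(S)^i\otimes_{\ZZ}\QQ$, write $f^*w=\Gamma_*(c)$, and then $\tfrac12 f_*\Gamma_*(c)=\tfrac12 f_*f^*w=w$. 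Hence the induced map $J(C)\otimes_{\ZZ}\QQ \to A_0(F)\otimes_{\ZZ}\QQ$ is surjective, so $A_0(F)\otimes_{\ZZ}\QQ$ is finite dimensional. Finally, passing from rational to integral coefficients exactly as in the proof of Theorem \ref{theorem2}, via lemma 3.1 of \cite{GG}, gives that $A_0(S/i)=A_0(F)$ is finite dimensional.

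The only points requiring care, and what I would flag as the main (though minor) obstacle, are the torsion/ramification bookkeeping: one must verify that $f_*f^*=2\,\id$ holds on the nose, which it does fibrewise even over the branch locus of $f$, so that $f^*\otimes\QQ$ is genuinely injective with the stated retraction; and one must read ``finite dimensional subgroup'' in the weakly representable sense so that the retract argument legitimately produces a surjection from a Jacobian. Neither is serious, so the corollary follows quickly once Theorem \ref{theorem2} is in place.
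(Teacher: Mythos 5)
Your proof is correct and takes the same route the paper intends, namely reducing the corollary to Theorem \ref{theorem2} applied to the quotient map $f\colon S\to S/i$. In fact the paper's own proof consists of the single sentence ``$S$ satisfies the condition of the previous theorem, hence the conclusion,'' leaving implicit exactly the push--pull step you supply ($f_*f^*=2\,\id$ on $A_0(S/i)$, $f^*(A_0(S/i))\subseteq A_0(S)^i$, and the resulting retraction after tensoring with $\QQ$ followed by the descent to integral coefficients via \cite{GG}), so your write-up is the more complete version of the same argument.
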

\begin{proof}
The surface $S$ satisfies the condition of the previous theorem \ref{theorem2}, hence the conclusion.
\end{proof}

\begin{corollary}
Let $S$ be a surface with $p_g=q=0$. Let $i$ be an involution on $S$ such that $S/i$ is smooth. Then the involution acts as $-1$ on $A_0(S)$.
\end{corollary}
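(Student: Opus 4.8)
The plan is to study $i_*$ through its eigenspaces on $A_0(S)\otimes\QQ$ and to reduce the claim to a vanishing statement for the quotient. Write $f\colon S\to F:=S/i$ for the quotient map, a finite morphism of degree two between smooth surfaces whose Galois involution is $i$. On zero cycles the projection formulas give $f^*f_*=1+i_*$ and $f_*f^*=2$, so the image of $f^*$ is exactly the $(+1)$-eigenspace of $i_*$, and $f_*,f^*$ induce mutually inverse isomorphisms $A_0(S)^{+}\otimes\QQ\cong A_0(F)\otimes\QQ$. Since $i_*=-\id$ on $A_0(S)$ is equivalent to the vanishing of $1+i_*=f^*f_*$, and since $f^*$ is rationally injective, the corollary becomes equivalent to the single assertion $A_0(F)\otimes\QQ=0$. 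First I would observe that $F$ inherits the numerical hypotheses: invariant forms descend along $f$, so $H^0(F,\Omega^2_F)=H^0(S,\Omega^2_S)^{i}=0$ and $H^1(F,\bcO_F)=H^1(S,\bcO_S)^{i}=0$, i.e. $p_g(F)=q(F)=0$.

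With this reduction the argument splits according to the Kodaira dimension of $F$. If $F$ is not of general type, then a surface with $p_g=q=0$ which is not of general type has $A_0=0$ by the Bloch--Kas--Lieberman theorem \cite{BKL}; hence $A_0(F)\otimes\QQ=0$ and the $(+1)$-eigenspace of $i_*$ is trivial. This case already subsumes the hypothesis of the introduction's theorem, because a pencil of genus one curves forces $\kappa(F)\le 1$; it is also the mechanism behind Corollary~\ref{cor1}, where the elliptic pencil is used, via the fibration argument of Theorem~\ref{theorem2} and Lemma~3.1 of \cite{GG}, to prove that $A_0(S/i)$ is finite dimensional, and a finite dimensional $A_0$ on a surface with $p_g=q=0$ necessarily vanishes.

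It remains to pass from $\QQ$-coefficients to $\ZZ$. Here I would invoke Roitman's theorem: because $q(S)=0$ we have $\Alb(S)=0$, and Roitman's theorem identifies the torsion of $A_0(S)$ with that of $\Alb(S)$, so $A_0(S)$ is torsion free. Therefore, once $1+i_*$ is known to annihilate $A_0(S)\otimes\QQ$, it annihilates $A_0(S)$ itself, which is precisely the statement that $i$ acts as $-1$.

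The main obstacle is the remaining possibility that $F=S/i$ is itself of general type with $p_g=q=0$. For such $F$ the required vanishing $A_0(F)=0$ is exactly Bloch's conjecture, which is open in general, and the elliptic pencil hypothesis that powered the earlier results is unavailable. The natural strategy would be to establish, by a Bloch--Kas--Lieberman type argument tailored to the double cover $f$ rather than to a fibration of $F$, that the $i$-invariant part of $A_0(S)$ (equivalently $A_0(F)$) is finite dimensional, and then to conclude its vanishing from $p_g(F)=q(F)=0$. Making this finite dimensionality unconditional, without assuming an auxiliary elliptic fibration on the quotient, is where the genuine difficulty of the statement lies.
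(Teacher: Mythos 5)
Your reduction is sound, but it follows a genuinely different route from the paper's, and your closing caveat is the most important observation. The paper's proof is a two-line appeal to the preceding results: since $A_0(S/i)$ is finite dimensional (Corollary \ref{cor1}, which requires the elliptic pencil on $S/i$), Theorem 2.3 of \cite{Voi} --- a Bloch--Srinivas style decomposition-of-the-diagonal argument --- shows that $\id+i_*$ factors through $\Alb(S)$, which vanishes because $q=0$; hence $i_*=-\id$. You instead work directly with the double cover: the projection formulas $f^*f_*=\id+i_*$ and $f_*f^*=2$ identify the $(+1)$-eigenspace of $i_*$ on $A_0(S)\otimes\QQ$ with $A_0(S/i)\otimes\QQ$, reducing the claim to the vanishing of $A_0(S/i)\otimes\QQ$, which you settle by \cite{BKL} when $S/i$ is not of general type, and you use Roitman's theorem to pass from $\QQ$- to $\ZZ$-coefficients. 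The two arguments consume the same input in different packaging: for a surface with $p_g=q=0$, finite dimensionality of $A_0$ is equivalent to its vanishing, so the paper's hypothesis and your target coincide; your route is the more elementary and self-contained of the two, while the paper's route via \cite{Voi} has the advantage of giving the integral statement without a separate torsion argument. What you correctly identify at the end is a gap in the statement itself rather than in either proof: as written the corollary omits the hypothesis that $S/i$ carries an elliptic pencil (or at least is not of general type), and without it the opening sentence of the paper's proof, ``Since $A_0(S/i)$ is finite dimensional,'' is unjustified --- the general-type case is precisely Bloch's conjecture and is not resolved by anything in this paper. So your proposal proves exactly as much as the paper's proof does, and locates the shared missing hypothesis precisely.
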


\begin{proof}
Since $A_0(S/i)$ is finite dimensional, it follows by \cite{Voi}[Theorem 2.3] that the homomorphism $\id+i_*$ factors through Albanese of $S$, which is zero. Hence the involution acts as $-1$ on $A_0(S)$. 
\end{proof}

\begin{remark}
The above corollary \ref{cor1} is useful to prove the Bloch's conjecture on $S/i$, when it has an elliptic pencil.
\end{remark}

\section{Curves on a surface and monodromy}
Let $S$ be a smooth, projective, surface over $\CC$. Let us fix an embedding of $S$ inside $\PR^N$. Let $\tau$ be an involution acting on $S$. Let $t$ be a closed point in ${\PR^N}^*$. Consider the corresponding hyperplane $H_t$ inside $\PR^N$ and consider its intersection with $S$. Then we get a curve $C_t$ inside $S$ and $\tau(C_t)$ inside $S$. By Bertini's theorem, a general such hyperplane section of $S$ will be smooth and irreducible. Now consider two  curves $C_t,\tau(C_t)$ in $S$. Let $g$ be the genus of $C_t$. Then we have the following commutative diagram.

$$
  \diagram
   \Sym^g C_t\times \Sym^g (\tau(C_t))\ar[dd]_-{} \ar[rr]^-{} & & \Sym^{2g}S\ar[dd]^-{} \\ \\
  A_0(C_t)\times A_0(\tau(C_t)) \ar[rr]^-{} & & A_0(S)
  \enddiagram
  $$
Here the morphism from $\Sym^g C_t\times \Sym^g \tau(C_t)$ to $\Sym^{2g}S$ is given by
$$(\sum_i P_i,\sum_j Q_j)=\sum_i P_i+\sum_j Q_j$$
and the homomorphism from $A_0(C_t)\times A_0(\tau(C_t))$ to $A_0(S)$
is given by
$$j_{t,\tau(t)*}=j_{t*}+j_{\tau(t)*}\;.$$
It is easy to see that the above diagram is commutative (since $\CC$ is algebraically closed). By the Abel-Jacobi theorem $A_0(C_t)\times A_0(\tau(C_t))$ is isomorphic to $J(C_t)\times J(\tau(C_t))$. Following the argument of \cite{BG}, proposition $6$ we get that the kernel of $j_{t,\tau(t)*}$ is a countable union of translates of an abelian subvariety of $J(C_t)\times J(\tau(C_t))$. Call this abelian subvariety $A_{t}$. Assume that $H^3(S,\QQ)=0$. Now we prove the following:
\begin{theorem}
\label{theorem 1}
For a very general $t$, the abelian variety $A_{t}$ is either $\{0\}$ or $J(C_t)\times\{0\}$ or $\{0\}\times J(\tau(C_t))$ or the diagonal $J(C_t)$,(induced by the diagonal embedding $C\to C\times \tau(C)$) or $J(C_t)\times J(\tau(C_t))$.
\end{theorem}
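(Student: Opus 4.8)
The plan is to convert the classification of the abelian subvariety $A_t$ into a classification of monodromy-invariant rational sub-Hodge structures of $W := H_1(C_t,\QQ)\oplus H_1(\tau(C_t),\QQ)$, the first homology of the fibre $J(C_t)\times J(\tau(C_t))$. Let $U\subseteq {\PR^N}^*$ be the Zariski-open locus where both $C_t$ and $\tau(C_t)$ are smooth; over $U$ there is an abelian scheme $\mathcal J\to U$ with fibre $J(C_t)\times J(\tau(C_t))$, and the kernels of $j_{t,\tau(t)*}$ assemble into a subscheme whose identity components are the $A_t$. Since the relative Hilbert scheme of abelian subschemes of $\mathcal J$ has only countably many components, there is a countable union $\bigcup_i Z_i$ of proper closed subvarieties of $U$ outside of which $A_t$ is the fibre of a single family; for such very general $t$ a parallel-transport argument (the Noether--Lefschetz-type reasoning behind \cite{BG}) shows that $V_t:=H_1(A_t,\QQ)$ is invariant under the monodromy representation $\rho:\pi_1(U,t)\to \mathrm{GL}(W)$. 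It therefore suffices to list the $\rho$-invariant $\QQ$-subspaces of $W$ that are sub-Hodge structures contained in the kernel.

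Second, I would pin down the monodromy. Writing $j:C_t\hookrightarrow S$, the monodromy-invariant part of $H^1(C_t,\QQ)$ is the image of the restriction $H^1(S,\QQ)\to H^1(C_t,\QQ)$; since $H^3(S,\QQ)=0$ is equivalent, by Poincar\'e duality, to $H^1(S,\QQ)=0$ (that is $q=0$), this invariant part vanishes, so $H^1(C_t,\QQ)$ and its dual $W_1:=H_1(C_t,\QQ)$ are pure vanishing (co)homology. By the classical irreducibility of the monodromy action on the vanishing cohomology of a Lefschetz pencil (restrict to a general line in $U$ and apply Picard--Lefschetz; the resulting transvections generate a Zariski-dense subgroup of the symplectic group), $W_1$ is an absolutely irreducible $\pi_1(U,t)$-module with $\End_{\pi_1}(W_1)=\QQ$. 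The same holds for $W_2:=H_1(\tau(C_t),\QQ)$; moreover the isomorphism of curves $\tau:C_t\to\tau(C_t)$ induces $\tau_*:W_1\to W_2$, an isomorphism of both Hodge structures and $\pi_1(U,t)$-modules (because $(x,s)\mapsto(\tau(x),s)$ identifies the two families over $U$), so $W_1\cong W_2$.

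Third, the representation theory is then routine. As $W=W_1\oplus W_2$ with $W_1\cong W_2$ simple and $\End_{\pi_1}(W_1)=\QQ$, every $\rho$-invariant subspace is $0$, $W_1$, $W_2$, all of $W$, or a graph $V_\lambda=\{(v,\lambda\tau_* v):v\in W_1\}$ for some $\lambda\in\QQ$. Each $V_\lambda$ is automatically a sub-Hodge structure, since $\tau_*$ is a morphism of Hodge structures. Imposing that $A_t$ lie in the kernel of $j_{t*}+j_{\tau(t)*}$ then removes the spurious graphs: on $V_\lambda$ one has $j_{t*}\alpha+\lambda\, j_{\tau(t)*}(\tau_*\alpha)=0$, and using $j_{\tau(t)*}\tau_*=\tau_* j_{t*}$ together with $\tau_*^2=\id$ on $A_0(S)$ forces $\lambda^2=1$ on the (nonzero) image, the value $\lambda=1$ being exactly the diagonal $J(C_t)$ attached to $x\mapsto(x,\tau(x))$ (which corresponds to $\tau=-\id$ on $\im j_{t*}$). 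Translating subspaces back to abelian subvarieties, namely $0\leftrightarrow\{0\}$, $W_1\leftrightarrow J(C_t)\times\{0\}$, $W_2\leftrightarrow\{0\}\times J(\tau(C_t))$, the diagonal, and $W\leftrightarrow J(C_t)\times J(\tau(C_t))$, yields the cases in the statement.

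The main obstacle is the monodromy input of the second step: one must verify that in this $\tau$-twisted setting the relevant representation really is the irreducible vanishing-cohomology representation. Concretely this needs (i) that a general pencil inside $U$ is a genuine Lefschetz pencil for $\{C_t\}$, so that the Picard--Lefschetz transvections act and generate an irreducible, endomorphism-free module, and (ii) that the hypothesis $H^3(S,\QQ)=0$ removes the fixed part, so that no further invariant subspaces intrude. A secondary point requiring care is the very-general reduction of the first step, that is, making precise that the countably many loci $Z_i$ are proper, together with the sign bookkeeping that separates the diagonal ($\lambda=1$) from its companion ($\lambda=-1$, corresponding to $\tau=\id$ on $\im j_{t*}$).
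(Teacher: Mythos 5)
Your strategy is the same as the paper's: realize $H_1(A_t,\QQ)$ as a monodromy-invariant sub-Hodge structure of $W=H_1(C_t,\QQ)\oplus H_1(\tau(C_t),\QQ)$, invoke irreducibility of the vanishing cohomology of a Lefschetz pencil (all of $H^1(C_t,\QQ)$ is vanishing because $H^3(S,\QQ)=0$, i.e. $q=0$), and then list the invariant subspaces. But one step of your write-up has a genuine gap, and one step is more careful than the paper itself.

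The gap is in your first step. The subvariety $A_t$ is cut out by rational equivalence on $S$, not by any topological condition on the fibre, so parallel transport does not obviously carry $A_t$ to $A_{t'}$; moreover the component of the relative Hilbert scheme containing the very general $A_t$ may dominate the base with degree $d>1$, in which case the monodromy of $\pi_1(U,t)$ only permutes the $d$ candidate subvarieties lying over $t$. What the spread argument gives for free is invariance of $H_1(A_t,\QQ)$ under a \emph{finite-index} subgroup of $G$, namely the image of $\pi_1$ of a Zariski open subset of a finite cover $C'\to L$ over which $A_t$ is defined. The paper devotes the entire second half of its proof to upgrading this to invariance under all of $G$: for $\gamma\in G$ some power $\gamma^m$ lies in the finite-index subgroup, and the Picard--Lefschetz formula shows that $\gamma^m(\alpha+\beta)-(\alpha+\beta)$ is a multiple of $\bigl(\langle\alpha,\delta_\gamma\rangle+\langle\beta,\delta_\gamma\rangle\bigr)\delta_\gamma$, whence that vector, and then $\gamma(\alpha+\beta)$, lies in $H_1(A_t,\QQ)$. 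You need this step (or a Zariski-density argument for the monodromy group) for your "parallel-transport" reduction to be valid.

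On the other hand, your third step repairs an inaccuracy in the paper's proof. Since $W_1\cong W_2$ as simple $G$-modules with $\End_G(W_1)=\QQ$, the invariant subspaces are not only $0$, $W_1$, $W_2$, $W$ but also every graph $V_\lambda$; the paper asserts the four-term list and then reinstates the diagonal in its conclusion without explanation. Your use of the kernel condition together with $j_{\tau(t)*}\tau_*=\tau_* j_{t*}$ and $\tau_*^2=\id$ to force $\lambda^2=1$ is the right way to eliminate the spurious graphs (you should add that $(\lambda^2-1)\,j_{t*}\alpha=0$ implies $j_{t*}\alpha=0$ via Roitman's theorem when $q=0$), and it correctly records that the anti-diagonal $\lambda=-1$ survives as a case that the theorem's wording does not separately list.
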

\begin{proof}
The argument comes from monodromy. Let $L$ denote a Lefschetz pencil on $S$, that is a line in ${\PR^N}^{*}$, such that the corresponding hyperplane sections with $S$ are either smooth or has ordinary double point singularities. Let $0_1,\cdots,0_m$ are the points on $L$ such that the corresponding fibers on $S$ are singular. We have a natural monodromy representation of the fundamental group of $L\setminus \{0_1,\cdots,0_m\}$ on the Gysin kernel at the level of cohomology, which is $H^1(C_t,\QQ)$ and hence on $H^1(\tau(C_t),\QQ)\cong H^1(C_t,\QQ)$ respectively, for a very general $t$ such that $C_t,\tau(C_t)$ are smooth. By theorem 3.27 in \cite{Vo} we have that these monodromy representations are irreducible. So it will follow that the induced representation of $G=\pi_1(L\setminus \{0_1,\cdots,0_m\},t)$ on $H^1(C_t,\QQ)\oplus H^1(\tau(C_t),\QQ)$ has the following property. Any $G$ invariant subspace of it is either $\{0\}$ or $H^1(C_t,\QQ)$ or $H^1(\tau(C_t),\QQ)$ or all of $H^1(C_t,\QQ)\oplus H^1(\tau(C_t),\QQ)$. Consequently, by using the correspondence between polarized Hodge structures of weight one and abelian varieties we have that the only non-trivial  abelian subvarieties of $J(C_t)\times J(\tau(C_t))$ are either $J(C_t)\times\{0\}$ or $\{0\}\times J(\tau(C_t))$ or the diagonal $J(C_t)$ or all of $J(C_t)\times J(\tau(C_t))$. Now to prove that $A_{t}$ is either one of these four possibilities or it is trivial, we have to show that the Hodge structure corresponding to $A_{t}$ is $G$ equivariant.  So for a very general $t$ we have an abelian subvariety $A_{t}$ of $J(C_t)\times J(\tau(C_t))$. Now consider the morphism of $\CC$ embedded in $\CC(t)$ and view $A_{t}$ and $J(C_t)\times J(\tau(C_t))$ as abelian varieties over ${\CC(t)}$. Let $K$ be the minimal field of definition of $A_{t}$ and $J(C_t)\times J(\tau(C_t))$ in $\overline{\CC(t)}$. Since $K$ is finitely generated over $\CC(t)$ and contained in $\overline{\CC(t)}$ we have $K$ a finite extension of $\CC(t)$. Let $C'$ be a curve  such that $\CC(C')$ is isomorphic to $K$ and $C'$ maps finitely onto $L$. Then we have $A_{t}$ and $J(C_t)\times J(\tau(C_t))$ defined over $K$ and we can spread $A_{t}$ and $J(C_t)\times J(\tau(C_t))$ over some Zariski open $U$ in $C'$. Call these spreads as $\bcA,\bcJ$. Then throwing out some more points from $U$ we will get that the morphism from $\bcA,\bcJ$ to $U$ are proper, submersions of smooth manifolds, if we view everything over $\CC$. Then by Ehressmann's theorem we have two fibrations $\bcA\to U$ and $\bcJ\to U$. Since any fibration gives rise to a local system and hence a monodromy representation of the fundamental group of $\pi_1(U,t')$ on $H^{2d-1}(A_{t},\QQ),H^{4g-1}(J(C_t)\times J(\tau(C_t)),\QQ)\cong H^1(C_t,\QQ)\oplus H^1(\tau(C_t),\QQ)$ where $d,2g$ are dimensions of $A_{t},J(C_t)\times J(\tau(C_t))$. Now $\pi_1(U,t')$ is a finite index subgroup of $G$. We prove that $H=H^{2d-1}(A_{t},\QQ)$ is a $G$-equivariant subspace of $H^1(C_t,\QQ)\oplus H^1(\tau(C_t),\QQ)$ (since $A_{t}$ is a sub-abelian variety of $J(C_t)\times J(\tau(C_t))$, $H^{2d-1}(A_{t},\QQ)$ is a subspace of $H^1(C_t,\QQ)\oplus H^1(\tau(C_t),\QQ)$). Now $G$ acts on $H^1(C_t,\QQ)\oplus H^1(\tau(C_t),\QQ)$ by the Picard-Lefschtez formula, that is
$$\gamma. (\alpha+\beta)=\gamma.\alpha+\gamma.\beta\;.$$
By definition the above is equal to
$$\alpha-\langle\alpha,\delta_{\gamma}\rangle\delta_{\gamma}+\beta
-\langle \beta,\delta_{\gamma}\rangle\delta_{\gamma}\;. $$
Now suppose that $\alpha+\beta$ belongs to $H$. We have to prove that for all $\gamma$ in $G$, $\gamma.(\alpha+\beta)$ belongs to $H$.
Consider $$(\gamma)^m(\alpha+\beta)=m\alpha-m
\langle\alpha,\delta_{\gamma}\rangle\delta_{\gamma}+
m\beta-m\langle\beta,\delta_{\gamma}\rangle\delta_{\gamma}
$$
$\delta_{\gamma}$ is the vanishing cycles corresponding to $\gamma$.
Since $\gamma^m$ is in $\pi_1(U,t')$ we have
$$\gamma^m(\alpha+\beta)-m\alpha-m\beta$$
is in $H$. That would mean that
$$m\langle\alpha,\delta_{\gamma}\rangle\delta_{\gamma}
+m\langle\beta,\delta_{\gamma}\rangle \delta_{\gamma}$$
is in $H$, by applying the Picard Lefschetz once again we get that
$$\gamma.(\alpha+\beta)$$
is in $H$. So $H$ is $G$ equivariant, hence it is either $\{0\}$ or $H^1(C_t,\QQ),H^1(\tau(C_t),\QQ)$ or all of $H^1(C_t,\QQ)\oplus H^1(\tau(C_t),\QQ)$. So the corresponding $A_{t}$ will either be zero or $J(C_t)\times\{0\}$ or $\{0\}\times J(\tau(C_t))$ or $J(C_t)$ or $J(C_t)\times J(\tau(C_t))$.
\end{proof}
This proves that if for one very general $t$, $A_{t}$ is one of the above mentioned possibilities then for another very general $t'$, $A_{t'}$ will achieve the same possibility because after all everything happens in a family.

\begin{remark}

Chuck Weibel has communicated to the author the following generalisation of the above theorem:

\smallskip

We can consider two Lefschetz pencils on $S$. Let $C,C'$ denote the very general fiber of these two pencils respectively. Then the bi-Gysin map $j_{C*}+j_{C'*}$ from $J(C)\times J(C')$ to $A_0(S)$ has the kernel equal to the countable union of translates of an abelian variety $A$ inside the product of the Jacobians. By monodromy this $A$ is either trivial or $J(C)\times \{0\}$ or $\{0\}\times J(C')$ or $J(C)\times J(C')$.
\end{remark}

\subsection{Countability of the Bi-Gysin kernel}
\begin{theorem}
\label{theorem3}
Let $S$ be a smooth projective surface embedded in $\PR^N$. Suppose $A_0(S)$ is not isomorphic to the Albanese variety $Alb(S)$.  Consider a Lefschetz pencil on $S$. Then for a very general $t$, $A_{t}$ is actually $\{0\}$ or the diagonal $J(C_t)$.
\end{theorem}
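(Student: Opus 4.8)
The plan is to argue by contradiction, using the classification in Theorem \ref{theorem 1} to narrow $A_t$ (for very general $t$) to the five listed abelian subvarieties and then ruling out the three ``degenerate'' ones $J(C_t)\times\{0\}$, $\{0\}\times J(\tau(C_t))$ and $J(C_t)\times J(\tau(C_t))$, leaving only $\{0\}$ and the diagonal $J(C_t)$. By the remark following Theorem \ref{theorem 1} the type of $A_t$ is constant over very general $t$, so it suffices to show that each of these three types is incompatible with the hypothesis $A_0(S)\not\cong Alb(S)$.

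First I would reduce all three degenerate cases to a single statement. Since $A_t\subseteq\ker(j_{t*}+j_{\tau(t)*})$, evaluation on $(a,0)$ shows that the case $A_t=J(C_t)\times\{0\}$ forces $j_{t*}\equiv 0$, evaluation on $(0,b)$ shows that $A_t=\{0\}\times J(\tau(C_t))$ forces $j_{\tau(t)*}\equiv 0$, and $A_t=J(C_t)\times J(\tau(C_t))$ forces both. Now $\tau$ restricts to an isomorphism $C_t\to\tau(C_t)$ with $\tau\circ j_t=j_{\tau(t)}\circ(\tau|_{C_t})$, so on zero cycles $\tau_*\circ j_{t*}=j_{\tau(t)*}\circ(\tau|_{C_t})_*$; as $\tau_*$ is an automorphism of $A_0(S)$ and $(\tau|_{C_t})_*$ an isomorphism of Jacobians, $j_{\tau(t)*}\equiv 0$ is equivalent to $j_{t*}\equiv 0$. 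Hence in all three cases $j_{t*}\equiv 0$ for every very general $t$, and the whole problem reduces to the \emph{Key Lemma}: if $j_{t*}\equiv 0$ for very general $t$, then $A_0(S)=0=Alb(S)$, contradicting the hypothesis.

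To prove the Key Lemma I would exploit that a Lefschetz pencil has a base locus: fix a base point $b$, which lies on every member $C_t$. For a very general closed point $x\in S$ the unique member $C_t$ through $x$ has very general $t$, so $x-b$ is a degree-zero divisor on $C_t$ whose class in $J(C_t)$ maps under $j_{t*}$ to $[x]-[b]$ in $A_0(S)$; since $j_{t*}\equiv 0$ this gives $[x]=[b]$ in $\CH_0(S)$ for every very general $x$. The albanese morphism is then constant on a Zariski-dense set (over the uncountable field $\CC$ the complement of countably many curves is dense), hence identically $0$, so $Alb(S)=0$. For the vanishing of $A_0(S)$ I would invoke the Bloch--Srinivas principle: by a standard spreading-out and countability argument the relation $[x]=[b]$ for very general $x$ upgrades to the statement that the generic point of $S$ is rationally equivalent to $b$, which yields a decomposition $\Delta_S=S\times b+Z$ in $\CH_2(S\times S)$ with $Z$ supported on $D\times S$ for a curve $D\subset S$; letting this correspondence act on a class $z\in A_0(S)$ (represented, by the moving lemma, off $D$) gives $z=(S\times b)_*z+Z_*z=0$. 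Thus $A_0(S)=0=Alb(S)$, so $A_0(S)\cong Alb(S)$, the desired contradiction; compare \cite{Vo}. Therefore $A_t$ is $\{0\}$ or the diagonal $J(C_t)$.

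I expect the main obstacle to be this last step of the Key Lemma: passing from the fibrewise vanishing ``$j_{t*}\equiv 0$ for very general $t$'' to a statement about the generic point of $S$, and then running the decomposition of the diagonal. The delicate point is the spreading-out, since ``$[x]=[b]$ for very general $x$'' is only a countable conjunction of rational-equivalence conditions and one must use that $\CC$ is uncountable to deduce the corresponding relation over $\CC(S)$; after that the Bloch--Srinivas machinery is routine. Everything before it (the reduction of the three degenerate cases to $j_{t*}\equiv 0$ via $\tau_*$, and the covering of very general points of $S$ by very general members of the pencil through $b$) is elementary.
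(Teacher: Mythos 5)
Your proposal reaches the right conclusion and its overall skeleton (invoke the classification of Theorem \ref{theorem 1}, observe that the three non-diagonal, non-trivial possibilities for $A_t$ all force $j_{t*}\equiv 0$ via $\tau_*\circ j_{t*}=j_{\tau(t)*}\circ(\tau|_{C_t})_*$, then derive a contradiction with $A_0(S)\not\cong Alb(S)$) matches the paper's; but your proof of the Key Lemma is genuinely different from, and more elementary than, the paper's. The paper rules out $j_{t*}\equiv 0$ by spreading the weak representability of $A_0$ of the geometric generic fibre over a finite cover $C'\to L$, compactifying, and using a localization/diagram-chase argument (following \cite{BG}, Theorem 19, and \cite{Ba}, Theorem 4.7.1) to show that vanishing of the fibrewise Gysin maps makes $A^2(\bcC_{C'}\cup\tau(\bcC)_{C'})\otimes\QQ$, hence $A^2(S)$, weakly representable; this also requires an intermediate step upgrading ``very general $t$'' to ``general $t$''. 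Your base-point trick short-circuits all of this: since every member of the pencil passes through a base point $b$ (nonempty because a codimension-two linear space always meets a surface in $\PR^N$), and a very general $x\in S$ lies on a member $C_{t(x)}$ with $t(x)$ very general, $j_{t*}\equiv 0$ immediately gives $[x]=[b]$ for very general $x$ — no passage from very general to general $t$ is needed. What your route buys is a much shorter and more self-contained contradiction (indeed a stronger conclusion, $A_0(S)=0=Alb(S)$ rather than mere weak representability); what the paper's route buys is independence from the base-point geometry, since it would survive replacing the pencil by a family without common points. Two small polish items: the diagonal decomposition should carry an integer $N$ (one only gets $N\Delta_S=N(S\times b)+Z$, hence $Nz=0$, and one then quotes Roitman together with $Alb(S)=0$ to kill torsion); and in fact you can avoid Bloch--Srinivas entirely, because the set $\{x:[x]=[b]\}$ is a countable union of closed subsets containing a very general subset of $S$, hence equals $S$ over the uncountable field $\CC$, and then $A_0(S)$ is generated by differences $[x]-[y]=0$.
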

\begin{proof}
Before going into the proof we mention that a detailed proof of this theorem is an easy generalisation of the theorem 19 in \cite{BG} and its elaboration is present in the PhD thesis of the author in \cite{Ba}[theorem 4.7.1]. Now we proceed to an outline of the proof.

Here we want to prove that if $A_0(S)$ is not weakly representable that is it is not isomorphic to the Albanese variety of $S$ then the kernel of the bi-Gysin homomorphism is countable for a very general hyperplane section $C_t$ of $S$ inside $\PR^N$.

For that consider the family $\bcC\cup \tau(\bcC)$, that is given by
$$\{(x,C_t):x\in C_t\}\cup \{(x,\tau(C_t)):x\in \tau(C_t)\}$$. Then $\bcC_t\cup\tau(\bcC)_t$ is nothing but $C_t\cup \tau(C_t)$.
Suppose that $T$ is the set of all $t$ in ${\PR^N}^{*}$ such that $C_t\cup \tau(C_t)$ is smooth. Let us choose a Lefschetz pencil $L$ in ${\PR^N}^{*}$ such that a very general member $C_t$ of this Lefschetz pencil has the property that $A^2(C_t)\oplus A^2(\tau(C_t))$ is weakly representable (this is trivially true since $C_t$ is a curve). Then for the geometric generic fiber $C_{\bar \eta}$ of $L$, we have
$$A^2(C_{\bar\eta})\oplus A^2(\tau(C_{\bar\eta}))$$
is weakly representable. So there exists a curve $\Gamma_{\bar\eta}$ and a correspondence $Z_{\bar\eta}$ on $\Gamma_{\bar\eta}\times(C_{\bar\eta}\cup \tau(C_{\bar\eta}))$
such that
$$Z_{\bar\eta*}:A^1(\Gamma_{\bar\eta})\to A^2(C_{\bar\eta})\oplus A^2(\tau(C_{\bar\eta}))$$
is surjective. Let $\Gamma_{\bar\eta}$, $Z_{\bar\eta}$ are defined over some finite extension $K$ of $\CC(t)$. Let $C'$ be a smooth projective curve mapping finitely onto $L$ and having function field $K$. Then we can spread $Z_{\bar\eta}$ and $\Gamma_{\bar\eta}$ over some Zariski open $U'$ in $C'$ to get $\bcZ,\bcG$, such that we have
$$\bcZ_*:A^1(\bcG)\to A^2(\bcC_{U'}\cup \tau(\bcC)_{U'})\;.$$

Compactifying and resolving singularities we get $\bcZ'_*$ and $\bcG'$ such that
$$\bcZ'_*:A^1(\bcG')\to A^2(\bcC_{C'}\cup \tau(\bcC)_{C'})\;.$$
We have the following commutative diagram.
$$
  \diagram
  A^1(\bcG')\ar[dd]_-{} \ar[rr]^-{} & & A^2(\bcC_{C'}\cup\tau(\bcC)_{C'})\ar[dd]^-{} \\ \\
  A^1(\Gamma_{\bar\eta'}) \ar[rr]^-{} & & A^2(\bcC_{\eta'}\cup\tau(\bcC_{\eta'}))
  \enddiagram
  $$

By a diagram chase \cite{Ba}[page 121-124] we can prove that for any $\alpha$ in $A^2(\bcC_{C'}\cup \tau(\bcC)_{C'})$ there exists $n_{\alpha}$ an integer such that $n_{\alpha}\alpha$ belongs to the subgroup generated by the image of $\bcZ'_*$ and the kernel of the pull-back $A^2(\bcC_{C'}\cup \tau(\bcC)_{C'})$ to $A^2(\bcC_{\bar\eta}\cup \tau(\bcC_{\bar\eta}))$. This is by theorem 4.7.1 (page 126) in \cite{Ba}.
Now the kernel of the pull-back $A^2(\bcC_{C'}\cup \tau(\bcC)_{C'})\to A^2(\bcC_{\bar \eta}\cup \tau(\bcC)_{\bar\eta})$ tensored with $\QQ$ is
the same as the kernel $A^2(\bcC_{C'}\cup \tau(\bcC)_{C'})\otimes \QQ\to A^2(\bcC_{ \eta'}\cup \tau(\bcC)_{\eta'})\otimes \QQ$. The later group in the above is the colimit of $A^2(\bcC_{W'}\cup \tau(\bcC)_{W'})\otimes\QQ$, $W'$ Zariski open in $C'$. So by the localization exact sequence the kernel is the direct sum of images of the bi-Gysin homomorphisms
$$(j_{t*}+j_{\tau(t)*}):\oplus_{t'\in C'}( A^1(C_t)\oplus A^1(\tau(C_t)))\otimes \QQ\to A^2(\bcC_{C'}\cup \tau(\bcC)_{C'})\otimes \QQ\;.$$

Arguing as in \cite{Ba}[Theorem 4.7.1] we can prove that if the bi-Gysin homomorphism is zero for a very general $t$, then actually it is zero for a general $t$. Also observe that if $j_{t*}=0$ then $j_{\tau(t)*}=0$. So supposing that the bi-Gysin homomorphism is zero or one of $j_{t*}$ or $j_{\tau(t)*}$ is zero we get that from the above
$$A^2(\bcC_{C'}\cup \tau(\bcC)_{C'})\otimes \QQ$$
is weakly representable. That will imply that $A^2(S)\otimes \QQ$ is weakly representable, hence so is $A^2(S)$. That will be a contradiction to our assumption. So if $A^2(S)$ is not weakly representable then for a very general $t$, the kernel of the bi-Gysin homomorphism is countable.

\end{proof}

\subsection{Surfaces with $p_g>0,q=0$ }
The proof of \ref{theorem 1} and \ref{theorem3} applied to the homomorphism $j_{t*}-j_{\tau(t)*}$ will show that the kernel of $j_{t*}-j_{\tau(t)*}$ is either $\{0\}$ or the diagonal $J(C_t)$ for a very general $t$, if $A^2(S)$ is not isomorphic to the albanese variety of $S$. This is the case for surfaces with $p_g>0,q=0$. Note that both the kernels $j_{t*}+j_{\tau(t*)}$ and $j_{t*}-j_{\tau(t*)}$ cannot be the diagonal $J(C_t)$. If it is the case then for any $z$ in $J(C_t)$, we have
$$j_{t*}(z)=\tau(j_{t*})(z)=-j_{t*}(z)$$
which implies that
$$2j_{t*}(z)=0$$
since $q=0$ and Roitman's theorem tells us $j_{t*}(z)=0$, so $j_{t*}$ is the zero map for a very general $t$. This contradicts the fact that the kernel of $j_{t*}$ is countable for a very general $t$, provided that $A^2(S)$ is not isomorphic to the albanese variety of $S$, \cite{BG}[Theorem 19]. Now since $A^2(S)$ is generated by cycles supported on $J(C_t)$, where $C_t$ is a very general smooth hyperplane section of $S$. So either for a very general $C_t$, kernel of both $j_{t*}+j_{\tau(t*)},j_{t*}-j_{\tau(t*)}$ are countable or one of the kernel is $J(C_t)$. So either both the kernels are countable or $\tau_{*}$ acts as $id$ or $-id$ on the image of $J(C_t)$ inside $A^2(S)$. If the later possibility happens then the action of $\tau_*$ on $A^2(S)$, can be detected from its action on the image of the Jacobian of the very general hyperplane section $C_t$. For that we have to exclude the possibility that  the kernels mentioned above, cannot both be countable.

\end{document}